\newcommand{\Rd}{\mathbb{R}^d}
\newcommand{\trac}{\mathbf{t}}
\newcommand{\xx}{\mathbf{x}}
\newcommand{\uu}{\mathbf{u}}
\newcommand{\uut}{\widetilde{\uu}}
\newcommand{\ut}{\widetilde{u}}
\newcommand{\vt}{\widetilde{v}}
\newcommand{\uer}{\ut_{E,r}}
\newcommand{\uet}{\ut_{E,\theta}}
\newcommand{\utr}{\ut_r}
\newcommand{\utt}{\ut_{\theta}}
\newcommand{\Xt}{\widetilde{X}}
\newcommand{\Yt}{\widetilde{Y}}
\newcommand{\ff}{\mathbf{f}}
\newcommand{\FF}{\mathbf{F}}
\newcommand{\nn}{\mathbf{n}}
\newcommand{\Atens}{\mathbb{A}}
\newcommand{\Btens}{\mathbf{B}}
\newcommand{\Ctens}{\mathbb{C}}
\newcommand{\II}{\mathbf{I}}
\newcommand{\PP}{\mathbf{P}}
\newcommand{\IIII}{\mathbb{I}}
\newcommand{\eps}{\bvarepsilon}
\newcommand{\sss}{\bsigma}
\newcommand{\ssst}{\widetilde{\sss}}
\newcommand{\st}{\widetilde{\sigma}}
\newcommand{\strr}{\st_{E,rr}}
\newcommand{\strt}{\st_{E,r\theta}}
\newcommand{\Sym}{\mathrm{Sym}(\Rd)}
\newcommand{\SymSym}{\mathrm{Sym}(\Sym)}
\newcommand{\Frr}{F_{rr}}
\newcommand{\Frt}{F_{r\theta}}
\newcommand{\eqand}{\quad \mathrm{and} \quad}
\newcommand{\ds}{\displaystyle}
\newcommand{\dd}{\, \mathrm{d}}
\newcommand{\dfrac}{\ds\frac}
\newtheorem{theorem}{Theorem}[section]
\newtheorem{lemma}[theorem]{Lemma}
\theoremstyle{definition}
\newtheorem{definition}[theorem]{Definition}
\theoremstyle{remark}
\newtheorem{remark}[theorem]{Remark}
\begin{document}

\title[Exact determination of the volume of an inclusion]%
{Exact determination of the volume of an inclusion in a body having %
constant shear modulus}
\author{Andrew E Thaler and Graeme W Milton} 
\address{Department of Mathematics, University of Utah, Salt Lake City, 
UT 84112, USA}
\ead{\mailto{thaler@math.utah.edu}, \mailto{milton@math.utah.edu}}


\begin{abstract}
    We derive an exact formula for the volume fraction of an inclusion 
    in a body when the inclusion and the body are linearly elastic 
    materials with the same shear modulus.  Our formula depends on an 
    appropriate measurement of the displacement and traction around the 
    boundary of the body.  In particular, the boundary conditions 
    around the boundary of the body must be such that they mimic the 
    body being placed in an infinite medium with an appropriate 
    displacement applied at infinity.  
\end{abstract}
\ams{74B05, 35Q74}

\noindent{\it Keywords\/}: volume fraction, Dirichlet-to-Neumann map, 
artificial boundary conditions
\submitto{\IP}
\maketitle


\section{Introduction}\label{sec:EVF_introduction}

A fundamental and interesting problem in the study of materials is the
estimation of the volume fraction occupied by an inclusion $D$ in a body
$\Omega$. Although the volume fraction could be determined by weighing
the body, the densities of the materials may be close or unknown or
weighing the body may be impractical. Because of this, many methods have
been developed which utilize measurements of certain fields around
$\partial \Omega$ to derive bounds on the volume fraction
$|D|/|\Omega|$, where $|U|$ is the Lebesgue measure of the set $U$
\cite{McPhedran:1982:ESI,Phan-Thien:1982:PUB,McPhedran:1990:ITP,%
Kang:1997:ICP,Alessandrini:1998:ICP,Cherkaeva:1998:IBM,%
Ikehata:1998:SEI,Alessandrini:1999:OSE,Alessandrini:2002:DIE,%
Alessandrini:2002:DCE,Capdeboscq:2003:OAE,Alessandrini:2004:DIEb,%
Capdeboscq:2008:IHS,Morassi:2009:DCI,Beretta:2011:SEE,Kang:2012:SBV,%
Milton:2012:BVF,Milton:2012:UBE,Kang:2013:BSI,Kang:2013:BVF3d,%
Thaler:2013:BVI,Kang:2014:BSS,Thaler:2014:BVI}. In this paper, we 
show that under certain circumstances the volume fraction $|D|/|\Omega|$
can be computed exactly from a single appropriate boundary measurement
around $\partial \Omega$. We note that many of the results in the
literature (and our results in this paper) can also be applied when
$\Omega$ contains a two-phase composite with microstructure much smaller
than the dimensions of $\Omega$.  

We consider an inclusion $D$ in a body $\Omega$ (or a two-phase
composite inside $\Omega$), where $\Omega$ is a subset of $\Rd$ ($d = 2$
or $3$).  We assume that the inclusion and body are filled with
linearly elastic materials with the same shear modulus $\mu$ and Lam\'e
moduli $\lambda_1$ and $\lambda_2$, respectively. Our goal is to
determine the volume fraction occupied by the inclusion, namely
$|D|/|\Omega|$, in terms of a measurement of the displacement and
traction around $\partial \Omega$. The boundary conditions around
$\partial \Omega$ are taken to be such that they mimic the body $\Omega$
being placed in an infinite medium with a suitable field at infinity.
The starting point for our result is based on an exact relation due to
Hill \cite{Hill:1963:EPR}, which we now describe.

One of the most important problems in the study of composite materials
is the determination of effective moduli given information about the
local moduli --- see, e.g., the work by Hashin \cite{Hashin:1983:ACM}
and the book by Milton \cite{Milton:2002:TOC} (Chapters 1 and 2 in
particular). In general, it is extremely difficult (if not impossible)
to determine effective parameters exactly, even if the microgeometry of
the composite is known and relatively uncomplicated. However, many
useful approximation techniques and bounds on effective properties of
composites have been derived in the literature --- see the book by
Milton \cite{Milton:2002:TOC} and the references therein for a vast
collection of such results. Surprisingly, there are several
circumstances in which exact links between effective moduli (or exact
formulas for the moduli themselves) can be derived regardless of the
complexity of the microstructure; such links are known as \emph{exact
relations}.  

Exact relations exist for a variety of problems including elasticity and
coupled problems such as thermoelasticity, thermoelectricity,
piezoelectricity, thermo-piezo-electricity, and others --- see the
review articles by Dvorak and Benveniste \cite{Dvorak:1997:OMI} and
Milton \cite{Milton:1997:CMM}, the work by Grabovsky, Milton, and Sage
\cite{Grabovsky:2000:ERE}, the works by Hegg
\cite{Hegg:2012:ERL,Hegg:2013:LBE}, and the references therein for
summaries of numerous previous and current results on exact relations.  

Perhaps even more surprising than the existence of exact relations is
the existence of a general mathematical theory of exact relations,
developed by Grabovsky, Milton, and Sage \cite{Grabovsky:1998:EREa,%
Grabovsky:1998:EREb,Grabovsky:1998:ERC,Grabovsky:1998:ROP,%
Grabovsky:2000:ERE}, that allows us to determine all of the above
mentioned exact relations and many more. For example, Hegg applied this
general theory to the study of fiber-reinforced elastic composites
\cite{Hegg:2012:ERL,Hegg:2013:LBE}.

Rather than study the general theory, we focus on a specific exact
relation derived by Hill \cite{Hill:1963:EPR,Hill:1964:TMP}. In
particular, Hill considered a two-phase composite material consisting of
two homogeneous and isotropic phases with the same shear modulus $\mu$
but different Lam\'e moduli $\lambda_1$ and $\lambda_2$. Hill proved
that such a composite is macroscopically elastically isotropic with
shear modulus $\mu$ and effective Lam\'e modulus $\lambda_*$; he also
derived an exact formula for $\lambda_*$ that holds regardless of the
complexity of the microgeometry.  His
derivation of this formula \cite{Hill:1963:EPR,Hill:1964:TMP} provides
the starting point of our work in this paper. 

We begin by assuming that the body $\Omega$ is embedded in an infinite
medium with Lam\'e modulus $\lambda_E$ and shear modulus $\mu$ (we take
$\lambda_E = \lambda_2$ for simplicity) and that a displacement $\uu =
\nabla g$ is applied at infinity. Using a method similar to Hill's
derivation of $\lambda_*$, we derive a formula for $|D|/|\Omega|$ in
terms of a measurement of the displacement around $\partial \Omega$, the
(known) parameters $\lambda_1, \lambda_2$, and $\mu$, and the (known)
function $g$. In order to make the situation more practical, we derive a
certain nonlocal boundary condition that can be applied to $\partial
\Omega$ that forces the body to behave as if it actually were embedded
in an infinite medium with Lam\'e modulus $\lambda_2$, shear modulus
$\mu$, and an applied displacement $\uu = \nabla g$ at infinity. This
nonlocal boundary condition couples the measurements of the traction and
displacement around $\partial \Omega$.  

Nonlocal boundary conditions which mimic infinite media similar to the
one mentioned above are common tools used in the numerical solution of
PDEs (and ODEs) in infinite domains --- see, e.g., the review article by
Givoli \cite{Givoli:1991:NRB} and references therein for examples
specific to scattering problems, the work by Han and Wu on the Laplace
equation \cite{Han:1985:AIB} and elasticity equations
\cite{Han:1992:AEB}, the work by Lee, Caflisch, and Lee on the
elasticity equations \cite{Lee:2006:EAB}, and references therein.  

As discussed above, our formula for the volume fraction $|D|/|\Omega|$
holds as long as the body $\Omega$ is embedded in an infinite medium
with an applied displacement $\uu = \nabla g$ at infinity. In
Section~\ref{sec:finite_medium} we derive a nonlocal boundary condition
such that if this boundary condition is applied to $\partial \Omega$ the
solution inside $\Omega$ will be equal to the restriction to $\Omega$ of
the solution to the infinite problem. Our boundary condition depends on
the function $g$ and on the exterior Dirichlet-to-Neumann map on
$\partial \Omega$ (which, when the body $\Omega$ is absent, maps the
displacement on $\partial \Omega$ to the traction on $\partial \Omega$
when no fields are applied at infinity). Thus it is closely related to
the boundary condition of Han and Wu \cite{Han:1985:AIB,Han:1992:AEB}
and Bonnaillie-No{\"e}l, Dambrine, H{\'e}rau, and Vial
\cite{Bonnaillie-Noel:2013:ACL} --- see Section~\ref{sec:finite_medium}
for complete details.  

The rest of this paper is organized as follows. In
Section~\ref{sec:elasticity} we briefly review the linear elasticity
equations and relevant results from homogenization theory. 
Next, in Section~\ref{sec:evf} we derive a formula that gives the exact
volume fraction of an inclusion in a body when the inclusion and the
body have the same shear modulus $\mu$ and the body is embedded in an
infinite medium with shear modulus $\mu$. We discuss the nonlocal
boundary condition relevant to our problem in
Section~\ref{sec:finite_medium} so we can focus on a (more realistic)
finite domain. Finally, in Section~\ref{sec:2D_example} we present the
analytical expression of the nonlocal boundary condition in the
particular case when $\Omega$ is a disk in $\mathbb{R}^2$ --- this
expression was first derived by Han and Wu \cite{Han:1985:AIB,%
Han:1992:AEB}. A complete derivation of our nonlocal boundary condition
is given in work by one of the authors of this paper
\cite{Thaler:2014:BVI}.


\section{Elasticity}\label{sec:elasticity}

We begin by recalling a few definitions from Hegg's work
\cite{Hegg:2012:ERL,Hegg:2013:LBE}. Let $d = 2$ or $3$ be the dimension
under consideration; then $\Sym$ is the set of all symmetric linear
mappings from $\Rd$ to itself, i.e.,
\begin{equation*}
	\Sym \equiv \left\{\Btens \in \Rd\otimes\Rd \,|\, \Btens 
	    = \Btens^T\right\}.
\end{equation*}
Similarly, the set $\SymSym$ is defined as the set of symmetric linear
mappings from $\Sym$ to itself. If $\Atens \in \SymSym$ and $\Btens \in
\Sym$, then $\Atens:\Btens \in \Sym$ with elements
\begin{equation}\label{eq:A_tensor_B}
	(\Atens:\Btens)_{ij} = A_{ijkl}B_{kl},
\end{equation}
where here and throughout the paper we use the Einstein summation 
convention that repeated indices are summed from $1$ to $d$.  
 
Consider a linearly elastic body which is either a periodic composite
material with unit cell $\Omega \subset \Rd$ or which occupies an open,
bounded set $\Omega \subset \Rd$. Let $\uu(\xx)$, $\eps(\xx)$, and
$\sss(\xx)$ denote the displacement, linearized strain tensor, and
Cauchy stress tensor, respectively, at the point $\xx \in \Omega$. Then
$\uu \in \Rd$ while $\eps$ and $\sss$ belong to $\Sym$. By Hooke's Law,
the stress and strain tensor are related through the linear constitutive
relation
\begin{equation}\label{eq:constitutive}
	\sss(\xx) = \Ctens(\xx) : \eps(\xx),
\end{equation}
where $\Ctens \in \SymSym$ is the elasticity (or stiffness) tensor. We
assume $\Ctens$ is elliptic for all $\xx \in \Omega$, i.e., there are
positive constants $a$ and $b$ such that
\begin{equation*}
	\Btens: \left[\Ctens(\xx) : \Btens'\right] 
	\le a \|\Btens\|\|\Btens'\| 
	\eqand \Btens:\left[\Ctens(\xx):\Btens\right] \ge b \|\Btens\|^2
\end{equation*}
for all $\Btens$, $\Btens' \in \Sym$ and where $\|\Btens\|^2 =
\frac{1}{2}(\Btens:\Btens)$. If there are no body forces present, then
at equilibrium the elasticity equations are
\begin{equation}\label{eq:elasticity_equations}
    \fl\nabla \cdot \sss(\xx) = 0, \quad 
    \eps(\xx) = \dfrac{1}{2}\left[\nabla \uu(\xx) 
        + \nabla \uu(\xx)^{T}\right], 
    \eqand \eps(\xx) = \Ctens(\xx) : \sss(\xx);
\end{equation}
see, e.g., Chapter 2 of the book by Milton \cite{Milton:2002:TOC}. 

If the composite is locally isotropic, 
the local elasticity tensor takes the form
\begin{equation*}
	\Ctens(\xx) = \lambda(\xx) \II\otimes\II + 2\mu(\xx)\IIII,
\end{equation*}
where $\lambda$ is the Lam\'e modulus, $\mu$ is the shear modulus, $\II
\in \Sym$ is the second-order identity tensor with elements $I_{ij} =
\delta_{ij}$ (where $\delta_{ij}$ is the Kronecker delta which is $1$ if
$i = j$ and $0$ otherwise), and $\IIII \in \SymSym$ is the fourth-order
identity tensor which maps an element in $\Sym$ to itself under
contraction \cite{Milton:2002:TOC}. In this case, Hooke's Law
\eref{eq:constitutive} reduces to
\begin{eqnarray}
	\mathcal{S}\uu(\xx) \equiv \sss(\xx) 
	&= \lambda(\xx)\Tr\left[\eps(\xx)\right]\II + 2\mu(\xx)\eps(\xx) 
	    \label{eq:isotropic_Hooke_eps} \\
	&= \lambda(\xx)\left[\nabla\cdot\uu(\xx)\right]\II 
	    + \mu(\xx)\left[\nabla\uu(\xx) + \nabla\uu(\xx)^T\right], 
	    \label{eq:isotropic_Hooke_u}
\end{eqnarray}
where $\mathcal{S}:\Rd \rightarrow \Sym$ is the linear stress operator
that maps the displacement $\uu$ to the stress $\sss$ (note that
$\mathcal{S}$ itself depends on $\xx$ through $\lambda(\xx)$ and
$\mu(\xx)$). 

The bulk modulus, Young's modulus, and the Poisson ratio are related to
$\lambda$ and $\mu$ by
\begin{equation*}
    \kappa = \lambda + \dfrac{2\mu}{d}, 
    \quad E = \dfrac{2\mu(d\lambda + 2\mu)}{(d-1)\lambda + 2\mu}, 
    \eqand \nu = \dfrac{\lambda}{(d-1)\lambda + 2\mu},
\end{equation*}
respectively \cite{Milton:2002:TOC}. Throughout this paper we assume
\cite{Ammari:2007:PMT}
\begin{equation}\label{eq:assumption_lambda_mu}
    \mu(\xx) > 0 \eqand d\lambda(\xx) + 2\mu(\xx) > 0.
\end{equation}

We define the average of a tensor-valued function $\mathbf{M}(\xx)$ over
a set $\mathcal{M} \subset \Rd$ by
\begin{equation}\label{eq:avg_definition_evf}
	\langle\mathbf{M}\rangle_{\mathcal{M}}\equiv\dfrac{1}{|\mathcal{M}|}
	    \int_{\mathcal{M}} \mathbf{M}(\xx) \dd\xx,
\end{equation}
where $|\mathcal{M}|$ denotes the Lebesgue measure of the set
$\mathcal{M}$. The effective elasticity tensor $\Ctens_*$ is defined at
sample points $\xx \in \Omega$ through
\begin{equation}\label{eq:effective_def}
	\langle\sss\rangle_{\Omega'(\xx)} = \Ctens_*(\xx)\langle\eps\rangle_{\Omega'(\xx)},
\end{equation}
where $\Omega'(\xx)$ is a suitably chosen representative volume element
centered at $\xx$. 


\section{Exact Volume Fraction}\label{sec:evf}

In this section we derive a formula that gives the exact volume fraction
occupied by an inclusion in a body, where our formula depends on a
boundary measurement of the displacement. 
Let $D$ and $\Omega$ be open,
bounded sets in $\Rd$ with $D\subset\Omega$. Suppose $\Rd$ is filled
with a linearly elastic, locally isotropic material with constant shear
modulus $\mu$ and Lam\'e modulus
\begin{equation}\label{eq:my_lambda}
    \lambda(\xx) = \lambda_1 \chi_D(\xx) + 
        \lambda_2 \chi_{\mathbb{R}^d \setminus \overline{D}}(\xx).
\end{equation}
Since the material is locally isotropic, the elasticity tensor is
\begin{equation}\label{eq:my_C}
    \Ctens(\xx) = \lambda(\xx)\II\otimes\II + 2\mu\IIII.
\end{equation}
We can write $\mathcal{S}\uu(\xx)$ from \eref{eq:isotropic_Hooke_u} as
\begin{equation}\label{eq:S_j}
	\fl\mathcal{S}\uu(\xx) =
		\cases{\mathcal{S}_1\uu(\xx) \equiv 
			    \lambda_1\left[\nabla\cdot\uu(\xx)\right] \II + 
			    \mu\left[\nabla\uu(\xx) + \nabla\uu(\xx)^T\right]
			    &for $\xx \in D$,\\
			\mathcal{S}_2\uu(\xx) \equiv 
			    \lambda_2\left[\nabla\cdot\uu(\xx)\right] \II + 
			    \mu\left[\nabla\uu(\xx) + \nabla\uu(\xx)^T\right]
			    &for $\xx \in \mathbb{R}^d \setminus\overline{D}$.}
\end{equation}
According to the elasticity equations in
\eref{eq:elasticity_equations}, the displacement $\uu$ satisfies
\begin{equation}\label{eq:full_displacement_problem}
    \cases{\mathcal{L}_1\uu = 0 &in $D$,\\
        \mathcal{L}_2\uu = 0 &in $\mathbb{R}^d \setminus 
            \overline{D}$,\\
        \uu, \sss\cdot\nn_D = \left(\mathcal{S}\uu\right)\cdot\nn_{D} 
            &continuous across $\partial D$, \\ 
        \uu-\ff = \mathcal{O}(|\xx|^{1-d}) &as 
            $|\xx|\rightarrow\infty$,}
\end{equation}
where $\mathcal{L}_j\uu = -(\lambda_j+\mu)\nabla(\nabla\cdot\uu) -
\mu\Delta\uu$ (for $j=1$, $2$) is the Lam\'e operator, $\nn_{D}$ is the
outward unit normal vector to $\partial D$, $\sss = \mathcal{S}\uu$ is
the stress tensor associated with $\uu$, and the function $\ff = \nabla
g$ is given and satisfies $\mathcal{L}_2 \ff = \mathcal{L}_2\nabla g =
0$ for all $\xx \in \Rd$. To avoid possible technical complications we
assume that $g$ is at least three times continuously differentiable in
$\mathbb{R}^d$. The function $\ff$ represents the ``displacement at
infinity''; perhaps the simplest example of such a function is $\ff(\xx)
= \xx$, in which case $g = \frac{1}{2}(\xx\cdot\xx) +\mathrm{constant}$.
As shown in Chapters~9~and~10 of the book by Ammari and Kang
\cite{Ammari:2007:PMT}, there exists a unique solution $\uu$ to
\eref{eq:full_displacement_problem} if $D$ is a Lipschitz domain. 

Following Hill's work \cite{Hill:1963:EPR,Hill:1964:TMP},
we assume there is a continuously differentiable potential $\phi$ such
that $\uu = \nabla \phi$. In particular, we assume $\phi$ and $\nabla
\phi$ are continuous across $\partial D$ (by
\eref{eq:full_displacement_problem}, $\uu = \nabla \phi$ must be
continuous across $\partial D$). Also, for $i$, $j = 1, \ldots, d$, we
have
\begin{equation}\label{eq:nnp_nng}
	(\nabla\nabla \phi)_{ij} = 
	    \frac{\partial^2\phi}{\partial x_i \partial x_j};
\end{equation}
note that the matrix $\nabla\nabla\phi$ is symmetric in each phase. We
only assume that $\phi$ and $\nabla \phi$ are continuous across
$\partial D$ (indeed, as shown by Hill \cite{Hill:1961:DRM},
$\partial^2 \phi/\partial x_i \partial x_j$ is discontinuous across
$\partial D$). Then from \eref{eq:elasticity_equations} we have
\begin{equation}\label{eq:potential_strain}
    \eps = \dfrac{1}{2}\left(\nabla\uu + \nabla\uu^T\right) 
         = \dfrac{1}{2}\left[\nabla\nabla\phi 
            + (\nabla\nabla\phi)^T\right]
         = \nabla \nabla \phi.
\end{equation}
From \eref{eq:nnp_nng} and \eref{eq:potential_strain} we have
$\Tr(\eps) = \Tr(\nabla\nabla \phi) = \Delta \phi$, where $\Delta =
\nabla\cdot\nabla = \partial^2/\partial x_i\partial x_i$ is the
Laplacian. Then \eref{eq:isotropic_Hooke_eps} and
\eref{eq:potential_strain} imply
\begin{equation}\label{eq:potential_stress}
    \sss(\xx) = \Ctens(\xx):\eps(\xx) 
        = \lambda(\xx)\Delta\phi\II + 2\mu\nabla\nabla\phi.
\end{equation}
Finally, for $j = 1$ and $j = 2$ we have
\begin{eqnarray}
    \mathcal{L}_j\uu 
        &= -(\lambda_j+\mu)\nabla\left(\nabla\cdot\nabla\phi\right) - 
            \mu\Delta\left(\nabla\phi\right) \nonumber \\
        &= -(\lambda_j+\mu)\nabla\left(\Delta\phi\right) - 
            \mu\nabla\left(\Delta\phi\right) \nonumber \\
        &= -(\lambda_j+2\mu)\nabla(\Delta\phi).\label{eq:potential_L}
\end{eqnarray}

By assumption, we have
\[
    0 = \mathcal{L}_2\ff = -(\lambda_2+2\mu)\nabla(\Delta g)
\]
for all $x\in \Rd$, so $\nabla(\Delta g) = 0$ for all $\xx \in \Rd$.
Then we must have $\Delta g = C_g \ne 0$ for all $\xx \in \Rd$, where
$C_g$ is a constant. (The constant $C_g$ is known since $g$ is known; we
will see later why we must take $C_g \ne 0$.) Thus the function $g$ must
be chosen so that $g = (C_g/2)\xx\cdot\xx + g_h$, where $g_h$ is
harmonic in $\Rd$. This implies that $g$ is infinitely differentiable in
$\mathbb{R}^d$ \cite{Evans:2010:PDE}.

Recalling that $\uu = \nabla \phi$ and $\ff = \nabla g$, we see that
\eref{eq:potential_L} implies that \eref{eq:full_displacement_problem}
becomes \begin{equation}\label{eq:potential_full_displacement_problem}
    \cases{\nabla(\Delta\phi) = 0 &in $D$ and 
            $\Rd \setminus \overline{D}$, \\
        \nabla\phi,\ \sss\cdot\nn_D = (\mathcal{S}\nabla\phi)\cdot\nn_D 
            &continuous across $\partial D$, \\
        \nabla\phi-\nabla g = \mathcal{O}(|x|^{1-d}) 
            &as $|\xx| \rightarrow \infty$,}
\end{equation}
where $\sss = \mathcal{S}\nabla\phi$ is given in
\eref{eq:potential_stress}.


\subsection{Behavior of $\Delta\phi$}\label{subsec:behavior}

In this section we study the behavior of $\Delta \phi$. Recall that we
assume $\phi$ to be at least continuously differentiable in $\Rd$; in
particular, this implies that $\phi$ and $\uu = \nabla \phi$ are
continuous across $\partial D$ (see
\eref{eq:potential_full_displacement_problem}). If $\ff$ is smooth the
solution $\uu$ to \eref{eq:full_displacement_problem} is smooth in
$\mathbb{R}^d \setminus \overline{D}$ and in $D$
\cite[equation~(10.2)]{Ammari:2007:PMT} (although it is only continuous
across $\partial D$).  
   
Since $\phi$ is smooth in $D$ and $\mathbb{R}^d \setminus \overline{D}$,
\eref{eq:potential_full_displacement_problem} implies that $\Delta\phi$
is constant in each phase, i.e., 
\begin{equation}\label{eq:phi_constant}
    \Delta\phi = 
    \cases{C_1 &in $D$,\\
        C_2 &in $\Rd \setminus \overline{D}$.}
\end{equation}

Recall from \eref{eq:elasticity_equations} that $\nabla \cdot \sss = 0$
in $\Rd$. By \eref{eq:potential_stress}, this becomes 
\begin{eqnarray}
    0 &= \nabla\cdot\sss(\xx) \nonumber \\
        &= \nabla \cdot\left[\lambda(\xx) \Delta \phi(\xx) \II +
            2\mu\nabla\nabla\phi(\xx)\right] \nonumber\\
        &= \nabla\left[\lambda(\xx)\Delta\phi(\xx)\right] + 
            2\mu\nabla\cdot\nabla\nabla\phi(\xx)\nonumber\\
        &= \nabla\left[\lambda(\xx)\Delta\phi(\xx)\right] 
            + 2\mu\Delta\left[\nabla\phi(\xx)\right] \nonumber\\
        &= \nabla\left\{\left[\lambda(\xx) + 2\mu\right]\Delta\phi(\xx)
            \right\}. \label{eq:grad_zero}      
\end{eqnarray}
This implies that 
\begin{equation}\label{eq:q_constant}
    \left[\lambda(\xx) + 2\mu\right]\Delta\phi(\xx) = C \quad 
    \Leftrightarrow \quad
    \Delta\phi(\xx) = \frac{C}{\lambda(\xx) + 2\mu}
\end{equation}
almost everywhere in $\mathbb{R}^d$, where $C$ is a constant 
\cite[Chapter~5]{Milton:2002:TOC}. 
Then, due to
\eref{eq:phi_constant} and \eref{eq:q_constant}, we have
\begin{equation}\label{eq:q_phi}
    \Delta\phi = 
    \cases{C_1 = \dfrac{C}{\lambda_1+2\mu} &in $D$,\\
        C_2 = \dfrac{C}{\lambda_2+2\mu} &in 
        $\Rd \setminus\overline{D}$.}
\end{equation}     

By \eref{eq:potential_full_displacement_problem}, we have $\nabla \phi
- \nabla g \rightarrow 0$ as $|\xx|\rightarrow \infty$; thus $\Delta
\phi - \Delta g \rightarrow 0$ as $|\xx|\rightarrow \infty$. Since
$\Delta g = C_g$, $\Delta \phi \rightarrow C_g$ as $|\xx|\rightarrow
\infty$. Since $\lambda(\xx) = \lambda_2$ for large enough $\xx$, we
take the limit of \eref{eq:q_constant} and find that
\begin{equation}\label{eq:C_E}
    C = \ds\lim_{|\xx|\rightarrow\infty}\left\{\left[\lambda(\xx) +2\mu
        \right]\Delta\phi(\xx)\right\} 
    = (\lambda_2+2\mu)C_g.
\end{equation}
Finally, \eref{eq:my_lambda}, \eref{eq:q_constant}, and \eref{eq:C_E}
imply that 
\begin{equation}\label{eq:Cs}
    \Delta\phi = \nabla \cdot \uu = 
    \cases{C_1 = \left(\dfrac{\lambda_2+2\mu}{\lambda_1+2\mu}\right)C_g 
            &in $D$,\\
        C_g &in $\Rd\setminus\overline{D}$.}
\end{equation}


\subsection{Main Result}\label{subsec:main_result}

The divergence theorem and \eref{eq:Cs} imply
\begin{equation*}
    \ds\int_{\partial \Omega} \uu\cdot\mathbf{n}_{\Omega} \dd S 
    = \int_{D} \nabla\cdot\uu \dd\xx + 
        \int_{\Omega\setminus \overline{D}} \nabla\cdot\uu \, d\xx
    = (C_1-C_2)|D| + C_2|\Omega|.
\end{equation*}
Therefore the volume fraction of the inclusion is given by the formula
\begin{equation}\label{eq:vf_formula}
    \dfrac{|D|}{|\Omega|} = \dfrac{1}{C_1-C_2}
        \left(\dfrac{1}{|\Omega|}\ds\int_{\partial \Omega}\uu\cdot
        \mathbf{n}_{\Omega} \dd S - C_2\right),
\end{equation}
where $C_1$ and $C_2$ are related to $C_g$ by \eref{eq:Cs},
respectively. Since we are assuming we have complete knowledge of $\uu$
around $\partial \Omega$ from our measurement, and since $C_g = \Delta
g$ is given, \eref{eq:vf_formula} allows us to exactly determine
$|D|/|\Omega|$. Note also that we must take $C_g \ne 0$. If $C_g = 0$,
then \eref{eq:Cs} implies that $C_1 = C_2 = 0$, which makes the formula
in \eref{eq:vf_formula} undefined. We have thus proved the following
theorem.
\begin{theorem}\label{thm:main_result}
    Let $D$ and $\Omega$ be open, bounded sets in $\Rd$ ($d = 2$ or $3$)
    such that $D \subset \Omega$ and $\partial D$, $\partial \Omega$ are
    smooth. Suppose $\Rd$ is filled with a material described by the
    local elasticity tensor given by \eref{eq:my_C} and
    \eref{eq:my_lambda}. Also suppose $\ff = \nabla g$ is given and
    $\mathcal{L}_2\ff = -(\lambda_2+\mu)\nabla(\nabla\cdot\ff) -
    \mu\Delta\ff = 0$ ($\Leftrightarrow \Delta g = C_g \ne 0$) for all
    $\xx \in\Rd$. Assume that $\uu\cdot \nn_{\Omega}$ is known around
    $\partial \Omega$. Then the volume fraction of the inclusion $D$ is
    given by \eref{eq:vf_formula}.
\end{theorem}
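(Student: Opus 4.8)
The plan is to follow Hill's strategy: reduce the vector transmission problem \eref{eq:full_displacement_problem} to a scalar problem for a potential $\phi$, and then exploit the fact that the shear modulus is a single constant $\mu$ to pin down $\nabla\cdot\uu$ inside each phase. First I would use the representation $\uu=\nabla\phi$ with $\phi$ and $\nabla\phi$ continuous across $\partial D$, so that \eref{eq:potential_strain} gives $\eps=\nabla\nabla\phi$, hence $\Tr(\eps)=\Delta\phi=\nabla\cdot\uu$, while \eref{eq:potential_L} shows that the Lam\'e operator acts as $\mathcal{L}_j\uu=-(\lambda_j+2\mu)\nabla(\Delta\phi)$. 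This recasts \eref{eq:full_displacement_problem} as \eref{eq:potential_full_displacement_problem}: $\nabla(\Delta\phi)=0$ in each phase, with the transmission conditions on $\nabla\phi$ and on $\sss\cdot\nn_D$ and the decay $\nabla\phi-\nabla g=\mathcal{O}(|\xx|^{1-d})$ retained. Interior regularity of $\uu$ in $D$ and in $\Rd\setminus\overline{D}$ (Ammari and Kang) then upgrades $\nabla(\Delta\phi)=0$ to the statement that $\Delta\phi$ equals a constant $C_1$ on $D$ and a constant $C_2$ on $\Rd\setminus\overline{D}$, as in \eref{eq:phi_constant}.

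Next comes the step where the constant-shear-modulus hypothesis is essential. Substituting \eref{eq:potential_stress} into the equilibrium equation $\nabla\cdot\sss=0$ and using that $\mu$ is constant, the chain of identities in \eref{eq:grad_zero} collapses to $\nabla\{[\lambda(\xx)+2\mu]\Delta\phi(\xx)\}=0$, so $[\lambda(\xx)+2\mu]\Delta\phi(\xx)$ is a single constant $C$ almost everywhere in $\Rd$ (equivalently, $[\lambda(\xx)+2\mu]\nabla\cdot\uu$ is bounded and distributionally harmonic on $\Rd$, hence constant). Passing to the limit $|\xx|\to\infty$, where $\lambda=\lambda_2$ and, by \eref{eq:potential_full_displacement_problem}, $\Delta\phi\to\Delta g=C_g$, yields $C=(\lambda_2+2\mu)C_g$; therefore $C_1=\frac{\lambda_2+2\mu}{\lambda_1+2\mu}\,C_g$ and $C_2=C_g$, which is \eref{eq:Cs}.

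Finally I would apply the divergence theorem on $\Omega$, split the integral over $D$ and over $\Omega\setminus\overline{D}$, and use that $\nabla\cdot\uu=\Delta\phi$ is piecewise constant:
\[
    \int_{\partial\Omega}\uu\cdot\nn_\Omega\dd S \;=\; C_1|D| + C_2\bigl(|\Omega|-|D|\bigr) \;=\; (C_1-C_2)|D| + C_2|\Omega| .
\]
Since $C_g\neq0$ and the two Lam\'e moduli differ (so that there is a genuine inclusion), \eref{eq:Cs} gives $C_1\neq C_2$, and one may divide and rearrange to arrive at exactly \eref{eq:vf_formula}; every quantity on the right-hand side is known, since $\uu\cdot\nn_\Omega$ is the measured boundary datum and $C_g=\Delta g$ is prescribed.

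The main obstacle is justifying the global constancy of $[\lambda(\xx)+2\mu]\Delta\phi$ across the interface $\partial D$: the pointwise ellipticity computation only yields this inside each phase, and one must invoke the traction transmission condition to ensure that $\nabla\cdot\sss=0$ holds in the distributional sense on all of $\Rd$ (so that no surface contribution supported on $\partial D$ is produced), together with the decay of $\uu-\nabla g$ to exclude a nonconstant harmonic profile. A related subtlety is knowing that the unique solution of \eref{eq:full_displacement_problem} is actually of the gradient form $\uu=\nabla\phi$; this is where Hill's observation enters --- because $\ff=\nabla g$ is irrotational and the two shear moduli coincide, the rotational part of $\uu$ solves a homogeneous transmission problem with vanishing data at infinity and therefore is zero, so the gradient ansatz does recover the solution guaranteed by Ammari and Kang.
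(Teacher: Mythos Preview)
Your proposal is correct and follows essentially the same route as the paper: assume the gradient ansatz $\uu=\nabla\phi$ (following Hill), deduce that $\Delta\phi$ is piecewise constant from \eref{eq:potential_full_displacement_problem}, use the global equilibrium $\nabla\cdot\sss=0$ together with the constant shear modulus to show $[\lambda(\xx)+2\mu]\Delta\phi$ is a single constant fixed by the behavior at infinity, and finish with the divergence theorem on $\Omega$. Your closing paragraph on the two ``obstacles'' (global constancy across $\partial D$ via the traction transmission condition, and the validity of the gradient ansatz via uniqueness for the rotational part) actually supplies more justification than the paper itself, which simply \emph{assumes} $\uu=\nabla\phi$ with $\phi$ continuously differentiable and cites \cite{Milton:2002:TOC} for the passage from \eref{eq:grad_zero} to \eref{eq:q_constant}.
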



\section{Finite Medium}\label{sec:finite_medium}

Consider again the linear elasticity problem from Section~\ref{sec:evf},
namely that of an inclusion $D$ in a body $\Omega$ which in turn is
embedded in an infinite medium $\Rd\setminus\overline{\Omega}$. The
isotropic and homogeneous materials in $D$ and
$\Rd\setminus\overline{D}$ have Lam\'e moduli $\lambda_1$ and
$\lambda_2$, respectively; we also assume that both materials have the
same shear modulus $\mu$. If a displacement $\ff = \nabla g$ is applied
at infinity, then the displacement $\uu = \nabla \phi$ satisfies
\eref{eq:full_displacement_problem} (so $\phi$ satisfies
\eref{eq:potential_full_displacement_problem}). Recall that we require
$\mathcal{L}_2\ff = 0$ in $\Rd$, which implies $\Delta g = C_g$ in
$\Rd$. Since $g$ and $\ff = \nabla g$ are smooth in $\mathbb{R}^d$, $g$,
$\ff$, and $\mathcal{S}_2\ff$ are continuous up to $\partial D$ from
outside $D$; in other words, the limits $g|_{\partial D^+}$,
$\ff|_{\partial D^+}$, and $(\mathcal{S}_2\ff)|_{\partial D^+}$ exist
and are finite at each point of $\partial D$, where $h|_{\partial D^+}$
and $h|_{\partial D^-}$ denote the restriction of the function $h$ to
$\partial D$ from outside and inside $D$, respectively.   

We now derive a boundary condition $\PP$ so that the solution
$\uu'$ to
\begin{equation}\label{eq:our_interior_problem}
    \cases{\mathcal{L}_1\uu' = 0 &for $\xx \in D$, \\
        \mathcal{L}_2\uu' = 0 &for $\xx \in 
            \Omega \setminus \overline{D}$, \\
        \uu', \sss'\cdot\nn_{D} = (\mathcal{S}\uu')\cdot\nn_{D} 
            &continuous across $\partial D$, \\
        \PP(\uu'_0,\trac'_0,\ff_0,\FF_0) = 0 &on $\partial \Omega$}
\end{equation}
is equal to the solution $\uu$ to \eref{eq:full_displacement_problem}
restricted to $\overline{\Omega}$, i.e., $\uu' =
\uu|_{\overline{\Omega}}$; we have defined
\begin{equation}\label{eq:boundary_data}
	\fl\uu'_0 \equiv \uu'|_{\partial \Omega^-}, \quad 
	\trac'_0 \equiv (\sss'|_{\partial \Omega^-})\cdot \nn_{\Omega},\quad 
	\ff_0 \equiv \ff|_{\partial \Omega^+}, \eqand 
	\FF_0 \equiv \left[(\mathcal{S}_2\ff)|_{\partial \Omega^+}\right]
	    \cdot\nn_{\Omega}.
\end{equation}
This allows us to apply our formula \eref{eq:vf_formula} to the problem
\eref{eq:our_interior_problem}, which is posed on the finite domain
$\Omega$. For details on a related problem (including proofs of the
well-posedness of problems similar to \eref{eq:our_interior_problem}),
see the papers of Han and Wu \cite{Han:1985:AIB,Han:1992:AEB}.    

To derive the boundary condition $\PP$, we begin by considering the
following exterior problem:
\begin{equation}\label{eq:exterior_displacement}
    \cases{\mathcal{L}_E\uut_E = 0 &for 
            $\xx \in \Rd\setminus \overline{\Omega}$,\\
        \uut_E = \uut &on $\partial \Omega$,\\
        \uut_E \rightarrow 0 &as $|\xx|\rightarrow \infty$,}
\end{equation}
where $\mathcal{L}_E\uu =
-(\lambda_E+\mu)\nabla(\nabla\cdot\uu)-\mu\Delta\uu$, $\lambda_E =
\lambda_2$, and $\uut$ is a given displacement on $\partial \Omega$.
Ultimately we wish to find the normal stress distribution
$(\ssst_E|_{\partial \Omega^+})\cdot \nn_{\Omega}$ around $\partial
\Omega$ given $\uut$ --- this mapping from the displacement on the
boundary to the traction on the boundary is defined as the exterior
Dirichlet-to-Neumann map.  
\begin{definition}\label{def:exterior_DtN}
    The \emph{exterior Dirichlet-to-Neumann (DtN) map} $\Lambda_E$ is
    defined by
    \begin{equation}\label{eq:exterior_DtN}
        \Lambda_E(\uut_E|_{\partial\Omega^+}) = 
            \Lambda_E(\uut) \equiv (\ssst_E|_{\partial\Omega^+}) \cdot 
                \nn_{\Omega} 
            = \left[(\mathcal{S}\uut_E)|_{\partial \Omega^+}\right]
                \cdot\nn_{\Omega},
    \end{equation}
    where $\uut_E$ solves \eref{eq:exterior_displacement} and $\ssst_E$
    and $\mathcal{S}\uut_E$ are given by \eref{eq:isotropic_Hooke_u}
    (with $\lambda(\xx) = \lambda_E$).
\end{definition}


\subsection{Equivalent Boundary Value Problems}%
\label{subsec:equivalent_BVPs}

We now return to the problem \eref{eq:full_displacement_problem}, 
which has a unique solution $\uu$.  We introduce exterior fields
\begin{equation}\label{eq:exterior_fields}
	\uu_E(\xx) \equiv \uu|_{\Rd\setminus\Omega} \eqand 
	    \sss_E(\xx) \equiv \sss|_{\Rd\setminus\Omega}; 
\end{equation}
we also introduce interior fields
\begin{equation}\label{eq:interior_fields}
	\uu_I(\xx) \equiv \uu|_{\overline{\Omega}} \eqand 
	    \sss_I(\xx) \equiv \sss|_{\overline{\Omega}}. 
\end{equation}
Recall that $\lambda_E = \lambda_2$.  
\begin{lemma}\label{lem:u_tilde}
	Define $\uut_E \equiv \uu_E - \ff$ where $\uu_E$ is defined in
	\eref{eq:exterior_fields} and $\ff = \nabla g$ satisfies
	$\mathcal{L}_E\ff = 0$ in $\Rd$. Then $\uut_E$ solves
	\eref{eq:exterior_displacement} with $\uut = (\uu_I|_{\partial
	\Omega^-}) - \ff_0$, where $\ff_0 = \ff|_{\partial \Omega^+}$ is
	defined in \eref{eq:boundary_data}. 
\end{lemma}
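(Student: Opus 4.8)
The plan is to verify, one by one, the three defining conditions of the exterior problem \eref{eq:exterior_displacement} for the field $\uut_E = \uu_E - \ff$, exploiting only the linearity of the Lam\'e operator, the hypothesis that $\ff = \nabla g$ is $\mathcal{L}_E$-harmonic in all of $\Rd$, and the fact that $\partial\Omega$ sits in the interior of the homogeneous exterior phase (so it is separated from the material interface $\partial D$, across which $\nabla\nabla\phi$ jumps). Since $\lambda(\xx) = \lambda_2 = \lambda_E$ on $\Rd\setminus\overline{\Omega}$, the operator governing $\uu$ there is literally $\mathcal{L}_E = \mathcal{L}_2$; I would point this out first, as it makes the rest purely formal.

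\emph{PDE.} By \eref{eq:full_displacement_problem} we have $\mathcal{L}_2\uu = 0$ on $\Rd\setminus\overline{D}$, hence $\mathcal{L}_E\uu_E = \mathcal{L}_2\uu = 0$ on $\Rd\setminus\overline{\Omega}$; combined with $\mathcal{L}_E\ff = 0$ and linearity, $\mathcal{L}_E\uut_E = \mathcal{L}_E\uu_E - \mathcal{L}_E\ff = 0$ on $\Rd\setminus\overline{\Omega}$.

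\emph{Boundary value.} Here I would use that $\overline{D}\subset\Omega$ and that $\ff$ is smooth, so the solution $\uu$ (equivalently $\nabla\phi$) is smooth across $\partial\Omega$ — the only locus of irregularity is $\partial D$. Therefore the two one-sided traces coincide, $\uu_E|_{\partial\Omega^+} = \uu|_{\partial\Omega} = \uu_I|_{\partial\Omega^-}$, and so on $\partial\Omega$ we obtain
\[
    \uut_E = \uu_E|_{\partial\Omega^+} - \ff|_{\partial\Omega^+} = \bigl(\uu_I|_{\partial\Omega^-}\bigr) - \ff_0 = \uut ,
\]
with $\ff_0$ as defined in \eref{eq:boundary_data}; this is precisely the Dirichlet datum named in the statement. \emph{Decay.} Finally, the asymptotic condition in \eref{eq:full_displacement_problem} gives $\uu - \ff = \mathcal{O}(|\xx|^{1-d})$ as $|\xx|\to\infty$, and since $1 - d < 0$ for $d = 2, 3$ this forces $\uut_E = \uu_E - \ff = (\uu - \ff)|_{\Rd\setminus\Omega} \to 0$. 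All three conditions of \eref{eq:exterior_displacement} now hold, which proves the lemma.

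The only step that is not pure bookkeeping is the continuity of $\uu$ up to and across $\partial\Omega$ used in the boundary-value step; I would justify it by noting that $\overline{D}$ is compactly contained in $\Omega$, so in the homogeneous shell $\Omega\setminus\overline{D}$ (and indeed in a neighbourhood of $\partial\Omega$) the displacement solves the constant-coefficient system $\mathcal{L}_2\uu = 0$ with smooth data, hence is smooth there by interior elliptic regularity (cf. \cite[eq.~(10.2)]{Ammari:2007:PMT}). Everything else — the PDE and the decay — is immediate from linearity and the hypotheses already recorded, so this is where the (mild) care is needed.
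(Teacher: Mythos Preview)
Your proof is correct and follows essentially the same three-step verification (PDE, boundary trace, decay) as the paper's own proof. The only cosmetic difference is in the justification of $\uu_E|_{\partial\Omega^+} = \uu_I|_{\partial\Omega^-}$: the paper simply notes that $\lambda_E = \lambda_2$ so there is no material interface at $\partial\Omega$, while you invoke interior elliptic regularity in the homogeneous shell---these amount to the same observation.
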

\begin{proof}
	First, since $\mathcal{L}_E\uu_E = 0$ in
	$\Rd\setminus\overline{\Omega}$ (by
	\eref{eq:full_displacement_problem}) and $\mathcal{L}_E\ff = 0$ in
	$\Rd \setminus\overline{\Omega}$, we have
    \[
        \mathcal{L}_E\uut_E = \mathcal{L}_E(\uu_E-\ff) = 
            \mathcal{L}_E\uu_E - \mathcal{L}_E\ff = 0
    \]
    in $\Rd\setminus\overline{\Omega}$ as well. Second, recall from
    \eref{eq:exterior_displacement} that $\uut \equiv \uut_E|_{\partial
    \Omega^+} \equiv (\uu_E|_{\partial \Omega^+}) - \ff_0$. Since
    $\lambda_E = \lambda_2$, $\uu$ must be continuous across $\partial
    \Omega$, i.e., $\uu_E|_{\partial\Omega^+} = \uu_I|_{\partial
    \Omega^-}$. Hence $\uut = (\uu_I|_{\partial \Omega^-})-\ff_0$.
    Finally, $\uut_E = \uu_E - \ff \rightarrow 0$ as $|\xx|\rightarrow
    \infty$ by \eref{eq:full_displacement_problem}. Thus $\uut_E$
    solves \eref{eq:exterior_displacement} with $\uut =
    (\uu_I|_{\partial \Omega^-})-\ff_0$. 
\end{proof}

\begin{theorem}\label{thm:equivalent_bvps}
	Suppose $\uu$ solves \eref{eq:full_displacement_problem} with $\ff
	= \nabla g$ and $g = (C_g/2)\xx\cdot\xx + g_h$ where $C_g \ne
	0$ is an arbitrary constant and $\Delta g_h = 0$ in $\Rd$. Define
	$\uu_E$, $\sss_E$ and $\uu_I$, $\sss_I$ as in
	\eref{eq:exterior_fields} and \eref{eq:interior_fields},
	respectively. Finally, define $\uut_E = \uu_E - \ff$. Then $\uu_I$
	satisfies
    \begin{equation}\label{eq:interior_problem}
    	\cases{\mathcal{L}_1\uu_I = 0 &in $D$,\\
    		\mathcal{L}_2\uu_I = 0 &in $\Omega\setminus\overline{D}$,\\
    		\uu_I, \sss_I\cdot \nn_D = (\mathcal{S}\uu_I)\cdot\nn_D 
    		    &continuous across $\partial D$,\\
    		\PP\left(\uu_I|_{\partial \Omega^-},
    		    (\sss_I|_{\partial \Omega^-})
    		    \cdot\nn_{\Omega},\ff_0,\FF_0\right) = 0 
    		    &on $\partial\Omega$,}
    \end{equation}
    where
    \begin{equation}\label{eq:P_formula_main}
    \fl\PP\big(\uu_I|_{\partial \Omega^-},(\sss_I|_{\partial\Omega^-})
    	    \cdot\nn_{\Omega},\ff_0,\FF_0\big) 
    	    \equiv (\sss_I|_{\partial\Omega^-})
    	    \cdot\nn_{\Omega}
    	    -\Lambda_E\big((\uu_I|_{\partial \Omega^-})-\ff_0\big) - \FF_0
    \end{equation}
    and $\ff_0$ and $\FF_0$ are defined in \eref{eq:boundary_data}.
\end{theorem}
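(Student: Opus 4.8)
The plan is to check, one line at a time, that the restriction $\uu_I \equiv \uu|_{\overline{\Omega}}$ satisfies \eref{eq:interior_problem}. The first three lines are immediate from the fact that $\uu$ solves \eref{eq:full_displacement_problem}: the equations $\mathcal{L}_1\uu_I = 0$ in $D$ and $\mathcal{L}_2\uu_I = 0$ in $\Omega\setminus\overline{D}$ hold because they hold for $\uu$ and $\partial D$ lies strictly inside $\Omega$ (we use here the natural standing assumption $\overline{D}\subset\Omega$, so that $\partial\Omega$ lies entirely in the region where $\lambda = \lambda_2$), and the continuity of $\uu_I$ and of $\sss_I\cdot\nn_D$ across $\partial D$ is inherited directly from \eref{eq:full_displacement_problem}. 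Thus the whole content of the theorem is the fourth line, i.e.\ $\PP\big(\uu_I|_{\partial\Omega^-},(\sss_I|_{\partial\Omega^-})\cdot\nn_\Omega,\ff_0,\FF_0\big) = 0$ on $\partial\Omega$; by \eref{eq:P_formula_main} this is equivalent to the identity $(\sss_I|_{\partial\Omega^-})\cdot\nn_\Omega = \Lambda_E\big((\uu_I|_{\partial\Omega^-})-\ff_0\big) + \FF_0$.

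To establish this identity I would feed the exterior field into the Dirichlet-to-Neumann map. By Lemma~\ref{lem:u_tilde}, $\uut_E = \uu_E - \ff$ solves \eref{eq:exterior_displacement} with boundary data $\uut = (\uu_I|_{\partial\Omega^-}) - \ff_0$, so Definition~\ref{def:exterior_DtN} gives
\[
\Lambda_E\big((\uu_I|_{\partial\Omega^-})-\ff_0\big) = \big[(\mathcal{S}_2\uut_E)|_{\partial\Omega^+}\big]\cdot\nn_\Omega,
\]
where the stress operator is $\mathcal{S}_2$ because $\lambda_E = \lambda_2$. Since $\mathcal{S}_2$ is linear, $\mathcal{S}_2\ff = \mathcal{S}_2\nabla g$ is continuous up to $\partial\Omega$ from outside with trace $\FF_0$ by \eref{eq:boundary_data}, and $\sss_E = \mathcal{S}_2\uu_E$ on $\Rd\setminus\overline{\Omega}$, this becomes
\[
\Lambda_E\big((\uu_I|_{\partial\Omega^-})-\ff_0\big) = (\sss_E|_{\partial\Omega^+})\cdot\nn_\Omega - \FF_0.
\]

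The one genuinely substantive point --- and the step I expect to need the most care --- is the continuity of the traction across $\partial\Omega$, namely $(\sss_E|_{\partial\Omega^+})\cdot\nn_\Omega = (\sss_I|_{\partial\Omega^-})\cdot\nn_\Omega$. Unlike the continuity across $\partial D$, which is imposed in \eref{eq:full_displacement_problem}, here it must be \emph{deduced}: because $\overline{D}\subset\Omega$, the coefficient $\lambda$ equals $\lambda_2$ in a neighbourhood of $\partial\Omega$, so $\uu$ solves a single constant-coefficient elliptic system there and, by the interior regularity recorded in Section~\ref{subsec:behavior} (from \cite[equation~(10.2)]{Ammari:2007:PMT}), $\uu$ and hence $\sss = \mathcal{S}_2\uu$ are smooth across $\partial\Omega$; in particular their one-sided traces agree. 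Substituting this into the previous display yields $\Lambda_E\big((\uu_I|_{\partial\Omega^-})-\ff_0\big) = (\sss_I|_{\partial\Omega^-})\cdot\nn_\Omega - \FF_0$, which rearranges to $\PP = 0$ on $\partial\Omega$ and completes the proof. Everything apart from this traction-continuity step is bookkeeping of one-sided traces and the linearity of $\mathcal{S}_2$ together with the definitions in \eref{eq:boundary_data}, \eref{eq:exterior_DtN}, and Lemma~\ref{lem:u_tilde}.
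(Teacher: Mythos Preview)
Your proof is correct and follows essentially the same route as the paper's: invoke Lemma~\ref{lem:u_tilde} to identify $\uut_E$ as the solution of \eref{eq:exterior_displacement} with data $(\uu_I|_{\partial\Omega^-})-\ff_0$, apply the definition of $\Lambda_E$, split off $\FF_0$ by linearity of $\mathcal{S}_2$, and use continuity of the traction across $\partial\Omega$ (since $\lambda\equiv\lambda_2$ there) to pass from $\sss_E|_{\partial\Omega^+}$ to $\sss_I|_{\partial\Omega^-}$. Your added justification of that last continuity step via interior regularity is a welcome clarification, but otherwise the argument coincides with the paper's.
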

\begin{proof}
    By definition (see \eref{eq:full_displacement_problem} and
    \eref{eq:interior_fields}), $\uu_I$ satisfies the differential
    equations and continuity conditions in \eref{eq:interior_problem}.
    By Lemma~\ref{lem:u_tilde}, $\uut_E = \uu_E - \ff$ solves
    \eref{eq:exterior_displacement} with $\uut = (\uu_I|_{\partial
    \Omega^-}) - \ff|_{\partial \Omega +}$. By \eref{eq:exterior_DtN},
    then, we have
    \begin{equation}\label{eq:exterior_mapping}
        	(\ssst_E|_{\partial \Omega^+})\cdot\nn_{\Omega} 
        	= \Lambda_E(\uut) 
        	= \Lambda_E\big((\uu_I|_{\partial \Omega^-})-\ff_0\big).
    \end{equation}
    Since $\mathcal{S}_E$ is linear, we have
    \begin{equation}\label{eq:linear_S_E}
    	    (\ssst_E|_{\partial \Omega^+})\cdot \nn_{\Omega} 
    	    = \left[(\mathcal{S}_E\uut_E)|_{\partial \Omega^+}\right]\cdot 
    	        \nn_{\Omega}
    	    = \left[(\mathcal{S}_E\uu_E)|_{\partial \Omega^+}\right]\cdot 
    	        \nn_{\Omega} - \FF_0.
    \end{equation}
    Then \eref{eq:exterior_mapping} and \eref{eq:linear_S_E} imply
    \begin{equation}\label{eq:P_intermediate}
    	    \left[(\mathcal{S}_E\uu_E)|_{\partial\Omega^+}\right]\cdot
    	        \nn_{\Omega} 
    	    = \Lambda_E\big((\uu_I|_{\partial \Omega^-})-\ff_0\big) + \FF_0.
    \end{equation}
    Since $\lambda_E = \lambda_2$, the traction across $\partial \Omega$
    must be continuous, i.e.,
    \begin{equation*}
        	\left[(\mathcal{S}_E\uu_E)|_{\partial\Omega^+}\right]\cdot
        	    \nn_{\Omega} 
        	= (\sss_E|_{\partial \Omega^+})\cdot\nn_{\Omega} 
        	= (\sss_I|_{\partial \Omega^-})\cdot\nn_{\Omega}.
    \end{equation*}
    Inserting this into \eref{eq:P_intermediate} gives
    \begin{equation}\label{eq:final_bc}
    	    \Lambda_E\big((\uu_I|_{\partial \Omega^-})-\ff_0\big) + \FF_0 
    	    = (\sss_I|_{\partial\Omega^-})\cdot\nn_{\Omega}.
    \end{equation}

    We define $\PP\big(\uu_I|_{\partial \Omega^-},(\sss_I|_{\partial
    \Omega^-})\cdot\nn_{\Omega},\ff_0,\FF_0\big)$ as in
    \eref{eq:P_formula_main}. Then, due to \eref{eq:final_bc}, the
    interior part of the solution $\uu$, namely $\uu_I$, satisfies
    \eref{eq:interior_problem}.
\end{proof}

We can thus identify the solution $\uu'$ of
\eref{eq:our_interior_problem} with $\uu_I$ which solves
\eref{eq:interior_problem}, i.e., $\uu' = \uu_I =
\uu|_{\overline{\Omega}}$. In other words, the solution to
\eref{eq:our_interior_problem} in the finite domain $\Omega$ will be
exactly the same as if $\Omega$ were placed in an infinite medium with
Lam\'e parameters $\lambda_E = \lambda_2$ and $\mu$ and a displacement
$\nabla g$ were applied at infinity. Therefore, if we apply the boundary
condition 
\begin{equation}\label{eq:PP_prime}
	\PP(\uu'_0,\trac'_0,\ff_0,\FF_0) 
	= \trac'_0 - \Lambda_E(\uu'_0 - \ff_0)-\FF_0
	= 0
\end{equation}
on $\partial \Omega$, where $\uu'_0$, $\trac'_0$, $\ff_0$, and $\FF_0$
are defined in \eref{eq:boundary_data}, we can use the measurement of
$\uu'\cdot\nn_{\Omega}$ around $\partial \Omega$ (i.e.,
$\uu'_0\cdot\nn_{\Omega}$) along with \eref{eq:vf_formula} (with $\uu$
replaced by $\uu'$) to find the volume fraction occupied by $D$.  
\begin{remark}\label{rem:sigma_prime}
	Since the geometry inside the body $\Omega$ is unknown, we cannot
	write $\sss'\cdot \nn_{\Omega}$ in terms of $\uu'$ (since we would
	not know whether or not to use $\lambda_1$ or $\lambda_2$ in
	\eref{eq:isotropic_Hooke_u}). Practically, we would typically apply
	a displacement $\uu'_0$ around $\partial \Omega$ with a known $\ff$
	and measure the resulting traction $\trac'_0$ around $\partial
	\Omega$. The displacement $\uu'_0$ and traction $\trac'_0$ around
	$\partial \Omega$ must be tailored so that $\PP(\uu'_0, \trac'_0,
	\ff_0, \FF_0) = 0$ --- see \eref{eq:PP_prime}.
\end{remark}

\section{Two Dimensional Example}\label{sec:2D_example}

The results presented here were first derived in a slightly different
form by Han and Wu \cite{Han:1985:AIB,Han:1992:AEB}. We
consider the case when $d = 2$ and $\Omega$ is a disk of radius $R$
centered at the origin, denoted $B_R$. In this geometry, it is possible
to determine $\Lambda_E$ exactly by first solving
\eref{eq:exterior_displacement} for the displacement $\uut_E$ in terms
of $\uut = \uut_E|_{\partial B_R}$ and then computing the corresponding
traction around $\partial B_R$, namely 
\begin{equation*}
	(\ssst_E|_{\partial B_R^+})\cdot\nn_{B_R} = 
	    \left.\left(\ssst_E\cdot\dfrac{\xx}{R}\right)
	        \right|_{\partial B_R^+} 
	    = \left.\left[\mathcal{S}_E(\uut_E)\cdot\dfrac{\xx}{R}\right]
	        \right|_{\partial B_R^+}.
\end{equation*}  
We state the main results here; the complete calculations are given
in work by one of the authors \cite{Thaler:2014:BVI}. For more general
regions, $\Lambda_E$ may have to be computed numerically.


\subsection{Exterior Dirichlet-to-Neumann Map}\label{subsec:ext_DtN}

We denote the polar components of $\uut_E$ by $\ut_{E,r}$ and
$\ut_{E,\theta}$. It is convenient to write
\begin{equation*}
	\uut_E(r,\theta) = \ut_{E,r}(r,\theta) + 
	    \rmi \ut_{E,\theta}(r,\theta),
\end{equation*}
where $\rmi = \sqrt{-1}$; see the books by Muskhelishvili
\cite{Muskhelishvili:1963:SBP} and England
\cite{England:1971:CVM} for more details.  

We begin by expanding $\uut(\theta) = \ut_r(\theta) + \rmi
\ut_{\theta}(\theta)$ in a Fourier series, namely
\begin{equation}\label{eq:Fourier_series_u_main}
	\fl\utr(\theta)+\rmi\utt(\theta) = \ds\sum_{n=-\infty}^{\infty}\ut_n 
	    \rme^{\rmi n \theta}, \quad \mathrm{where} \quad  
	    \ut_n = \dfrac{1}{2\pi}\int_{0}^{2\pi} \left[\utr(\theta')+
	    \rmi\utt(\theta')\right]\rme^{-\rmi n \theta'} \dd\theta'.
\end{equation}
Then it can be shown that \cite{Thaler:2014:BVI}
\begin{equation}\label{eq:ue_main}
    \eqalign{\fl\uer(r,\theta) + \rmi\uet(r,\theta)
        = \ut_0 R r^{-1} + \ds\sum_{n=1}^{\infty} \ut_{-n}R^{n-1} 
            r^{-(n-1)}\rme^{-\rmi n\theta} \\
        + \ds\sum_{n=1}^{\infty} \left[\ut_nR^{n+1}r^{-(n+1)} + 
            \left(\frac{n-1}{\rho_E}\right)\overline{\ut_{-n}}R^{n-1}
            r^{-(n+1)}\left(r^2-R^2\right)\right]\rme^{\rmi n\theta}}
\end{equation}
for $r \ge R$ and where $\rho_E \equiv (\lambda_E+3\mu)/(\lambda+\mu)$.

Next we recall from \eref{eq:exterior_DtN} that $\Lambda_E(\uut) =
(\ssst_E|_{\partial B_R^+})\cdot\nn_{B_R}$. In polar coordinates, the
components of the traction around the boundary of the disk of radius $r
\ge R$ are $\strr(r,\theta) + \rmi \strt(r,\theta)$ (where $\strr$ is
the radial component of the traction and $\strt$ is the angular
component of the traction). In particular, the traction around $\partial
B_R$ is given by 
\begin{equation}\label{eq:Fourier_series_s_main}
	\strr(R^+,\theta) + \rmi \strt(R^+,\theta) = \Lambda_E(\uut) 
	= \ds\sum_{n=-\infty}^{\infty}\st_n\rme^{\rmi n \theta},
\end{equation}
where $\strr(R^+,\theta) + \rmi\strt(R^+,\theta) = (\strr +
\rmi\strt)|_{\partial B_R^+}$, 
\begin{equation}\label{eq:DtN_coeff_main}
    \left\{
    \eqalign{\st_n &= -\frac{2\mu}{R}(n+1)\ut_n \quad (n \ge 0),\\
        \st_{-n} &= -\frac{2\mu}{R\rho_E}(n-1)\ut_{-n}\quad (n \ge 1),}
    \right.
\end{equation}
and the coefficients $\ut_n$ are defined in
\eref{eq:Fourier_series_u_main}.


\subsection{Nonlocal Boundary Condition}%
\label{subsec:boundary_condition}

In this section, we derive an expression for the boundary condition
$\PP(\uu'_0,\trac'_0,\ff_0,\FF_0) = 0$, where $\PP$ is defined in
\eref{eq:PP_prime}. We begin by expanding $\ff_0$ in a Fourier series
around $\partial B_R$; we have
\begin{equation}\label{eq:f_Fourier_main}
	\fl (f_r + \rmi f_{\theta})|_{\partial B_R^+} = 
	    f_r(R^+,\theta) + \rmi f_{\theta}(R^+,\theta) = 
	    f_{0,r}(\theta) + \rmi f_{0,\theta}(\theta) = 
	    \ds\sum_{n=-\infty}^{\infty} f_{0,n}\rme^{\rmi n\theta},
\end{equation}
where
\begin{equation*}
	f_{0,n} = \frac{1}{2\pi}\int_{0}^{2\pi} 
    	\left[f_{0,r}(\theta') + \rmi f_{0,\theta}(\theta')\right]
	    \rme^{-\rmi n\theta'} \dd\theta.
\end{equation*}

Next we define 
\begin{equation*}
	\FF \equiv \mathcal{S}_E \ff = \mathcal{S}_E \nabla g = 
	    \lambda_E\Delta g\II + 2\mu \nabla \nabla g,
\end{equation*}
where the last equality holds by \eref{eq:S_j}. Recall from
\eref{eq:boundary_data} that $\FF_0 = (\FF|_{\partial
B_R^+})\cdot\nn_{B_R}$. In complex notation, the normal components of
$\FF$ around the boundary of a disk of radius $r > 0$ are given by
$\Frr(r,\theta) + \rmi\Frt(r,\theta)$, where $\Frr$ is the radial
component and $\Frt$ is the angular component. We can expand $(\Frr +
\rmi\Frt)|_{\partial B_R^+}$ in a Fourier series as
\begin{equation}\label{eq:F_Reit_main}
	\fl (\Frr + \rmi\Frt)|_{\partial B_R^+} = 
	    \Frr(R^+,\theta) + \rmi \Frt(R^+,\theta) =  
	    F_{0,r}(\theta) + \rmi F_{0,\theta}(\theta) =  
	    \ds\sum_{n=-\infty}^{\infty} F_{0,n}\rme^{\rmi n \theta},
\end{equation}
where
\begin{equation*}
	F_{0,n} = \frac{1}{2\pi}\int_0^{2\pi} 
	    \left[F_{0,r}(\theta') + \rmi F_{0,\theta}(\theta')\right]
	    \rme^{-\rmi n \theta'} \dd\theta'.
\end{equation*}

Next we expand $g$ in a Fourier series around the disk of radius $r > 0$
as
\begin{equation*}
	g(r,\theta) = \ds\sum_{n=-\infty}^{\infty}g_n(r)
	    \rme^{\rmi n \theta}, 
	\quad \mathrm{where} \quad 
	g_n(r) = \frac{1}{2\pi}\ds\int_{0}^{2\pi}g(r,\theta')
	    \rme^{-\rmi n \theta'} \dd\theta'.
\end{equation*}
We can write the Fourier coefficients $F_{0,n}$ in terms of the
coefficients
$g_n$ as 
\begin{eqnarray}\label{eq:Fn_R_main}
        \fl F_{0,n} = (\lambda_E+2\mu)
            \left.\frac{\partial^2 g_n(r)}{\partial r^2}
            \right|_{r\rightarrow R^+} + \lambda_E\left[\frac{1}{R}
            \left.\frac{\partial g_n(r)}{\partial r}
            \right|_{r\rightarrow R^+} 
            - \frac{n^2}{R^2} g_n(R^+)\right] \nonumber\\
        + 2\mu\left[-\frac{n}{R}
            \left.\frac{\partial g_n(r)}{\partial r}
            \right|_{r \rightarrow R^+}+ \frac{n}{R^2}g_n(R^+)\right].
\end{eqnarray}

Returning to \eref{eq:P_formula_main}, recall that $(\uu'|_{\partial
B_R^-}) - \ff|_{\partial B_R^+} = \uu'_0 - \ff_0 = \uut$, where $\uu'$
solves \eref{eq:our_interior_problem}. Thus if we write
\begin{equation*}
	\fl (u'_r + \rmi u'_{\theta})|_{\partial B_R^-} 
	    = u'_r(R^-,\theta) + \rmi u'_{\theta}(R^-,\theta) 
	    = u'_{0,r}(\theta) + \rmi u'_{0,\theta}(\theta) 
	    = \ds\sum_{n=-\infty}^{\infty}u'_{0,n}\rme^{\rmi n\theta}, 
\end{equation*}
where
\begin{equation*}
	u'_{0,n} = \frac{1}{2\pi}\int_0^{2\pi}\left[u'_{0,r}(\theta') 
	    +\rmi u'_{0,\theta}(\theta')\right]
	    \rme^{\rmi n \theta'}\dd\theta',
\end{equation*}
then $u'_{0,n} - f_{0,n} = \ut_n$ --- see
\eref{eq:Fourier_series_u_main}. The components of the traction
$(\sss'|_{\partial B_R^-})\cdot\nn_{B_R} = \trac'_0$ can be written in
polar coordinates as $t'_{0,r}(\theta) + \rmi t'_{0,\theta}(\theta)$.
This can be expanded in a Fourier series as well, namely
\begin{equation*}
	\fl (\sigma'_{rr} + \rmi\sigma'_{r\theta})|_{\partial B_R^-} 
	    = \sigma'_{rr}(R^-,\theta) + \rmi\sigma'_{r\theta}(R^-,\theta) 
	    = t'_{0,r}(\theta) + \rmi t'_{0,\theta}(\theta) 
	    = \ds\sum_{n=-\infty}^{\infty} t'_{0,n} \rme^{\rmi n \theta}, 
\end{equation*}
where
\begin{equation*}
	t'_{0,n} = \frac{1}{2\pi}\int_{0}^{2\pi} \left[t'_{0,r}(\theta') 
	    +\rmi t'_{0,\theta}(\theta')\right]
	    \rme^{-\rmi n\theta'}\dd\theta'.
\end{equation*}

Recalling the Fourier expansions of $\ff_0$ given in
\eref{eq:f_Fourier_main}, $\FF_0$ given in \eref{eq:F_Reit_main} (and
\eref{eq:Fn_R_main}), and $ \Lambda_E(\uut) = \Lambda_E(\uu'_0 -
\ff_0)$ given in
\eref{eq:Fourier_series_s_main}--\eref{eq:DtN_coeff_main}, the
condition $\PP(\uu'_0,\trac'_0,\ff_0,\FF_0) = 0$ is equivalent to
\begin{eqnarray}
	\trac'_0 - \FF_0 - \Lambda_E(\uu'_0-\ff_0) = 0 \nonumber \\
	\eqalign{
		\Leftrightarrow \ds\sum_{n=-1}^{\infty} 
		    \left[t'_{0,n} - F_{0,n} + \frac{2\mu}{R}(n+1)
		    \left(u'_{0,n}-f_{0,n}\right)\right]\rme^{\rmi n \theta}\\
	\qquad+ \ds\sum_{n=2}^{\infty} \left[t'_{0,-n} - F_{0,-n} + 
	    \frac{2\mu}{R\rho_E}(n-1)\left(u'_{-n}-f_{0,-n}\right)\right]
	    \rme^{-\rmi n \theta} = 0.
	}\label{eq:final_bc_main}
\end{eqnarray}
Therefore we have the following relationships between the Fourier
coefficients of the polar components of the displacement, traction, and
applied stress around $\partial B_R$:
\begin{equation}\label{eq:Fourier_relationship_main}
    \left\{
    \eqalign{t'_{0,n} - F_{0,n} + \frac{2\mu}{R}(n+1)
        \left(u'_n-f_{0,n}\right) &= 0 \quad (n \ge -1),\\
        t'_{0,-n} - F_{0,-n} + \frac{2\mu}{R\rho_E}(n-1)
            \left(u'_{-n}-f_{0,-n}\right) &= 0 \quad (n \ge 2).}
    \right.
\end{equation}
\begin{remark}\label{rem:sigma_n}
	Recall from \eref{eq:our_interior_problem} that $\trac'_0$ is the
	traction around $\partial B_R$ due to the applied displacement
	$\uu'_0$. In practice, one could consider applying a displacement
	$\uu'_0$ around $\partial \Omega$ with a known $\ff$ and then
	measuring $\trac'_0$ around $\partial B_R$. The applied displacement
	$\uu'_0$ and measured traction $\trac'_0$ have to be such that
	\eref{eq:final_bc_main} (and, hence,
	\eref{eq:Fourier_relationship_main}) holds.  
\end{remark}


\subsection{Previous Results}\label{subsec:previous_results}

Previously, Han and Wu also derived an expression for the exterior DtN
map $\Lambda_E(\uut)$ \cite{Han:1985:AIB,Han:1992:AEB}. They found the
solution $\uut_E$ to \eref{eq:exterior_displacement} by a method
slightly different from the one we used; they then computed the
Cartesian components of the traction $\ssst_E\cdot\nn_{\Omega}$ around
$\partial B_R$. In particular, if we denote the Cartesian components of
$\uut_E$ by $\ut_E$ and $\vt_E$, the Cartesian components of
$\uut_E|_{\partial B_R^+} = \uut$ by $\ut$ and $\vt$, and the Cartesian
components of the traction $(\ssst_E|_{\partial B_R^+})\cdot\nn_{B_R}$
by $\Xt$ and $\Yt$, then $\Lambda_E(\uut) = \Lambda_E(\ut + \rmi\vt) =
\Xt + \rmi\Yt$. In particular they showed
\cite[equations~(29)~and~(30)]{Han:1992:AEB}
\begin{equation}\label{eq:exterior_DtN_disk}	
	\eqalign{\Xt = \frac{2+2\eta}{1+2\eta}\frac{\mu}{\pi R}
		    \ds\sum_{n=1}^{\infty} 
		    \int_{0}^{2\pi} 
		    \dfrac{\mathrm{d}^2\ut(\theta')}{\mathrm{d} \theta'^2}
		    \dfrac{\cos n(\theta-\theta')}{n}\dd\theta'\\
		    \qquad-
		    \frac{2\eta}{1+2\eta}\frac{\mu}{\pi R}\ds\sum_{n=1}^{\infty} 
		    \ds\int_{0}^{2\pi} 
		    \dfrac{\mathrm{d}^2\vt(\theta')}{\mathrm{d}\theta'^2}
		    \dfrac{\sin n(\theta-\theta')}{n}\dd\theta'; \\
		\Yt = \frac{2+2\eta}{1+2\eta}\frac{\mu}{\pi R}
		    \ds\sum_{n=1}^{\infty} \int_{0}^{2\pi} 
		    \dfrac{\mathrm{d}^2\vt(\theta')}{\mathrm{d} \theta'^2}
		    \dfrac{\cos n(\theta-\theta')}{n}\dd\theta' \\
		    \qquad+ 
		    \frac{2\eta}{1+2\eta}\frac{\mu}{\pi R}\ds\sum_{n=1}^{\infty} 
		    \int_{0}^{2\pi} 
		    \dfrac{\mathrm{d}^2\ut(\theta')}{\mathrm{d} \theta'^2}
		    \dfrac{\sin n(\theta-\theta')}{n}\dd\theta'}
\end{equation}
where $\ut(\theta') = \ut_E(R^+,\theta')$, $\vt(\theta') =
\vt_E(R^+,\theta')$, and $\eta = \mu/(\lambda_E+\mu)$. Also see the
books by Muskhelishvili \cite[Section~83]{Muskhelishvili:1963:SBP} and
England \cite[Section~4.2]{England:1971:CVM} for solutions to problems
related to \eref{eq:exterior_displacement} based on potential
formulations. A proof that our formulas
\eref{eq:Fourier_series_s_main}--\eref{eq:DtN_coeff_main} agree with
\eref{eq:exterior_DtN_disk} as long as $\uut$ is smooth enough is given
in the work by Thaler \cite{Thaler:2014:BVI}.


\ack

AET would like to thank Patrick Bardsley, Andrej Cherkaev, Elena
Cherkaev, Fernando Guevara Vasquez, and Hyeonbae Kang for helpful
discussions. The work of AET and GWM was supported by the National
Science Foundation through grant DMS-1211359.


\section*{References}
\bibliographystyle{unsrt}
\bibliography{EVF_journal_short}

\end{document}